\newtheorem{theorem}{Theorem}[section]
\newtheorem{lemma}{Lemma}[section]
\newtheorem{remark}{Remark}[section]
 \theoremstyle{plain}
\newtheorem{thm}{Theorem}[section]
\newtheorem{prop}[thm]{Proposition}
\DeclareMathOperator{\sgn}{sgn}
\numberwithin{equation}{section}
\begin{document}
\begin{center}
\Large{\textbf{ On Classical solution to the Discrete Coagulation Equations with Collisional Breakage }}
\end{center}





\medskip
\medskip
\centerline{${\text{Mashkoor~ Ali}}$ and  ${\text{Ankik ~ Kumar ~Giri$^*$}}$ }\let\thefootnote\relax\footnotetext{$^*$Corresponding author. Tel +91-1332-284818 (O);  Fax: +91-1332-273560  \newline{\it{${}$ \hspace{.3cm} Email address: }}ankik.giri@ma.iitr.ac.in}
\medskip
{\footnotesize

  \centerline{ ${}^{}$Department of Mathematics, Indian Institute of Technology Roorkee,}
   \centerline{Roorkee-247667, Uttarakhand, India}

}

\bigskip

\begin{quote}
{\small {\em \bf Abstract.} In this article, the existence of global classical solutions to the discrete coagulation equations with collisional breakage is established for collisional kernel having linear growth whereas the uniqueness is shown  under additional restrictions on collisional kernel. Moreover, mass conservation property and propagation of moments of  solutions are also discussed.}
\end{quote}


\vspace{.3cm}

\noindent
{\rm \bf Mathematics Subject Classification(2020).} Primary: 34A12, 34K30; Secondary: 46B50.\\

{ \bf Keywords:} Coagulation, Collisional breakage, Existence, Uniqueness, propagation of moments,  Compactness.\\




\section{\textbf{INTRODUCTION}}
Coagulation and breakage  models aim at describing the mechanisms by which clusters combine to form bigger clusters or break into smaller fragments. These models are used to explain a wide range of phenomena, such as cloud droplets formation \cite{LG 1976,SRC 1978} and planet formation \cite{BRILL 2015,SV 1972}.   
Each cluster in these situations is fully characterized by a size variable (volume, mass, number of clusters, etc.) that can be either a positive real number (continuous models) or a positive integer (discrete models). The clusters we are looking at are discrete in the sense that  they are made up of a finite number of fundamental building blocks (monomers) having unit mass.
  In nature, when we examine a very short period of time, coagulation is binary, whereas breakage can occur in two ways: linear (spontaneous) or non-linear. The linear breakage process is governed solely by clusters properties (and also by external forces, if any), whereas the non-linear breakage process occurs when two or more clusters encounter and matter is transferred between them. As a result, the mass of the emerging cluster in a non-linear breakage process may be larger than the colliding clusters.
The discrete coagulation equations with collisional breakage describe the time evolution of concentration $w_i(t)$ for the  $i^{th}$-cluster at time $t\geq 0$ and read

\begin{align}
\frac{dw_i}{dt}  =& \frac{1}{2}\sum_{j=1}^{i-1} p_{j,i-j} a_{j,i-j} w_j w_{i-j} -\sum_{j=1}^{\infty} a_{i,j} w_i w_j \nonumber \\
&+ \frac{1}{2} \sum_{j=i+1}^{\infty} \sum_{k=1}^{j-1} N_{j-k,k}^i(1- p_{j-k,k}) a_{j-k,k} w_{j-k} w_k, \hspace{.5cm} i \in \mathbb{N}, \label{DCBEC}\\
w_i(0) &= w_i^0, \hspace{.5cm} i \in \mathbb{N}.\label{IC}
\end{align}

The first term on the right-hand side of  \eqref{DCBEC} accounts for the appearance of $i$-clusters through collision and coagulation of smaller ones, while the second term accounts for their disappearance due to collisions with other clusters. The third term describes the  appearance  of $i$-clusters after the collision and breakup of larger clusters. Here $a_{i, j}$ denotes the rate by which  clusters of size  $i$  collide with clusters  of size $j$ and $p_{i,j}$ is the probability of the event that the two colliding clusters of sizes $i$ and $j$ join to form a single cluster. If this does not occur, clusters fragment with the possibility of matter transfer, and this event occurs with the probability $(1-p_{ i,j})$. The distribution function of the generated fragments, $\{N_{i,j}^s, s=1,2,...,i+j-1\}$, has the properties listed below.

\begin{align}
N_{i,j}^s = N_{j,i}^s \geq 0 \hspace{.7cm} \text{and} \hspace{.7cm} \sum_{s=1}^{i+j-1} s N_{i,j}^s = i+j. \label{LMC}
\end{align}
The second term in \eqref{LMC} infer that   mass is conserved during each collisional breakage event. 

For a solution $ w(t) = (w_i(t))_{i \in \mathbb{N}}$ of \eqref{DCBEC}, we define the $r$-th moment as
\begin{align} \label{Moments}
\mathcal{M}_r(w(t))=\mathcal{M}_r(t) :=\sum_{i=1}^{\infty} i^r w_i(t)  \hspace{.3cm} \text{for} \hspace{.3cm} r \geq 0.
\end{align}
In the above equation \eqref{Moments}, the zeroth $(r=0)$ and first $(r=1)$ moments denotes the total number of particles and the total mass of particles, respectively, in the system. Before going any further, it is important to note that in the absence of fragmentation $(p_{i, j} = 1)$, the system \eqref{DCBEC}--\eqref{IC} is Smoluchowski coagulation equation  which has been widely studied by physicists and mathematicians (see, e.g., \cite{COSTA 2015} and the references therein). Since, it is known that the total mass of particles are neither created nor destroyed in the reactions described by \eqref{DCBEC}. Thus it is expected that total the total mass $\mathcal{M}_1(t)$ remain conserved throughout the time evolution.
In the last few decades, the linear(spontaneous) fragmentation equation with coagulation has received a lot of attention which intially studied by Filippov \cite{FAF 61}, Kapur \cite{KAPUR 72}, McGrady and Ziff \cite{MCG 87, ZRM 85}. In \cite{BAN 12, BAN 19, BAN 19I, BAN 12I, LAMB 20}, semigroup technique has been employed to study the existence and uniqueness of classical  solutions to linear fragmentation equation with coagulation having  appropriate assumptions on coagulation and fragmentation kernels whereas in \cite{CARR 92, CARR 94, Laurencot 1999, Laurencot 2001, Laurencot 2002}, issues related to existence and uniqueness of weak solutions to coagulation equation with spontaneous fragmentation have been investigated by using weak $L^1$ compactness method (for more information, see \cite{BLL 2019} and  references therein). The nonlinear breakage equation, on the other hand, has not been studied to that level. In \cite{CHNG 90}, Cheng and Redner discussed  the  dynamics of continuous, linear and collision-induced nonlinear fragmentation events. For a linear fragmentation process, they looked at scaling theory to characterise the evolution of the cluster size distribution, whereas for a non linear fragmentation process, they have examined the asymptotic behaviour of a class of models in which a two-particle collision causes both particles to break into two equal parts, just the larger particle to split in two, or only the smaller particle to split. Later, Krapivsky and Ben-Naim \cite{Krapivsky 2003} studied the kinetics of nonlinear collision-induced fragmentation, obtaining the fragment mass distribution analytically using the travelling wave behaviour of the nonlinear collsion equation. They have also demonstrated that the system goes through a shattering transition, in which a finite part of the mass is lost to fragments of infinitesimal sizes. Lauren\c{c}ot and Wrzosek \cite{Laurencot 2001} discussed the existence, uniqueness, mass conservation, and the large time behaviour of weak solutions to \eqref{DCBEC}--\eqref{IC} with suitable restrictions on the collision kernel and probability function using the weak $L^1$ compactness technique. Using the same technique, one of the authors has extensively studied the continuous version of the \eqref{DCBEC}--\eqref{IC} in recent years in \cite{PKB 2020I, PKB 2021, PKB 2021I, AKG 2021, AKG 2021I}, where the existence and uniqueness of weak solutions have been discussed for different classes of collision kernels, while global classical solution is discussed in \cite{PKB 2020}. The present work is motivated from \cite{Laurencot 2001} and the goal of  this paper  is to prove the existence, uniqueness  and propagation of moments  of the classical solution to  equation \eqref{DCBEC}--\eqref{IC} using the approach developed in \cite{PBD 77}.

The paper is organized as follows. Section 2 contains definition, assumptions on kernels and initial data and the statement of the  main theorem, while Section 3 covers the local existence theorem, moment estimates, time equicontinuity, and the proof of  main theorem. In Section 4, it is  shown that the solution is unique, and finally the propogation of moments is explored in  Section 5.

\section{\textbf{ Function spaces and Assumptions }}

Fix  $T \in (0,\infty)$.  Let $\Omega_{\beta}(T)$ be the space of  $ w = (w_i)_{i \in \mathbb{N} } \in \mathcal{C}([0,T],l_{1}^{\beta})$ with bounded norm defined by
$$ \| w \| := \sup_{t \in [0,T]} \sum_{i=1}^{\infty} i^{\beta}|w_i(t)|, \hspace{.4cm}\text{where} \hspace{.4cm} 1 < \beta \leq 2,$$
and its positive cone
$$ \Omega_{\beta}^+(T) = \{ w =(w_i)_{i\in \mathbb{N}} \in \Omega_{\beta}(T) : w_i \geq 0 \hspace{.2cm}  \text{for each } \hspace{.2cm} i \geq 1\}. $$

In order to show the existence, mass conservation property and propagation of moments of  classical solution to (\ref{DCBEC})--(\ref{IC}), assume that the collision kernel $a_{i,j}$ and the probability function $p_{i,j}$ are non-negative and  symmetric i.e.\
\begin{align}\label{ASSUM}
0\leq a_{i,j}= a_{j,i} \leq A (i +j ) \hspace{.2cm} \text{and}  \hspace{.2cm} 0\leq p_{i,j}= p_{j,i} \leq 1 \hspace{.2cm}  \forall i,j \geq 1,
\end{align}
where $A$ is a positive constant. 
Moreover, for proving the uniqueness of classical solution to \eqref{DCBEC}--\eqref{IC}, the following additional restriction on the collision kernel is required
\begin{align}\label{ASSUM1}
0\leq a_{i,j}= a_{j,i} \leq B(i^{\gamma} +j^{\gamma}), \hspace{.2cm}\text{where} \hspace{.2cm}  \gamma \in [0,1] \hspace{.2cm} \text{and}\hspace{.2cm} 1< 1+\gamma \leq \beta,
\end{align}
where $B$ is a positive constant.

We are now ready to state the main theorem:
\begin{theorem}(Main Theorem)\label{MT}
Assume that the collision rate $a_{i,j}$ and the probability function $p_{i,j}$ satisfy \eqref{ASSUM} and initial data $w^0=(w_i^0)_{i\in \mathbb{N}} \in \Omega_{\beta}^+(0)$. Further assume that there is a constant $\alpha_0$ such that 
\begin{align}\label{FNP}
N_{i,j}^s  \leq \alpha_0 \hspace{.5cm} \text{for} \hspace{.5cm} s=1,2,...i+j-1.
\end{align}
Then there exists a  mass conserving classical solution to \eqref{DCBEC}--\eqref{IC} in $\Omega_{\beta}^+(T)$. In addition, if \eqref{ASSUM1} holds, then the classical solution to \eqref{DCBEC}--\eqref{IC} is unique.
\end{theorem}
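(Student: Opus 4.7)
The plan is to follow the truncation--compactness strategy of Ball--Carr type, adapted to collisional breakage as in \cite{PBD 77}. First I would replace \eqref{DCBEC}--\eqref{IC} by a truncated finite system on cluster sizes $\{1,\dots,n\}$, obtained by discarding every interaction producing a cluster of size larger than $n$. The resulting $n$-dimensional ODE has a locally Lipschitz, quadratic right-hand side, so standard ODE theory supplies a unique nonnegative solution $w^n=(w_i^n)_{1\le i\le n}$ on a maximal interval of existence, nonnegativity being preserved because every loss term in the $i$-th equation is proportional to $w_i^n$.

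The heart of the existence proof is a uniform-in-$n$ bound on the $\beta$-moment. Differentiating $\mathcal{M}_\beta(w^n)$ and collecting the coagulation and collisional breakage contributions, the identity $\sum_s s\, N_{j-k,k}^s = j$ from \eqref{LMC} bounds the fragment-gain term by a multiple of the pure-coagulation gain with weight $(j+k)^\beta$, since $i\le j$ implies $i^\beta\le i\, j^{\beta-1}$. The elementary inequality $(j+k)^\beta - j^\beta - k^\beta\le C_\beta\bigl(jk^{\beta-1}+kj^{\beta-1}\bigr)$ combined with the linear growth $a_{j,k}\le A(j+k)$ from \eqref{ASSUM} then yields
\begin{equation*}
\frac{d}{dt}\mathcal{M}_\beta(w^n(t))\le C\,\mathcal{M}_1(w^0)\,\mathcal{M}_\beta(w^n(t)),
\end{equation*}
so that Gronwall gives a bound uniform in $n$ on $[0,T]$; this also makes the truncated solutions global on $[0,T]$ and shows that $\mathcal{M}_1(w^n)\le \mathcal{M}_1(w^0)$.

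For compactness I would combine this uniform $l_1^\beta$-bound (which, since $\beta>1$, controls the tails of the first moment) with a time-equicontinuity statement: for each fixed $i$, the right-hand side of the truncated ODE is uniformly bounded in $n$ and $t\in[0,T]$ by \eqref{ASSUM}, \eqref{FNP} and the moment bound, so $\{w_i^n\}_n$ is equibounded and equicontinuous. A diagonal Arzel\`a--Ascoli extraction produces a limit $w=(w_i)\in\Omega_\beta^+(T)$ with $w_i^n\to w_i$ uniformly on $[0,T]$ for each $i$. The tail control furnished by the $\beta$-moment bound then lets me pass to the limit termwise in each of the three series of \eqref{DCBEC} via dominated convergence, and simultaneously transfers mass conservation from the truncated system to $w$.

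For uniqueness under \eqref{ASSUM1}, given two solutions $w,\tilde w\in\Omega_\beta^+(T)$ I would study
\begin{equation*}
U(t)=\sum_{i\ge 1}(1+i^\gamma)\,|w_i(t)-\tilde w_i(t)|.
\end{equation*}
Taking absolute differences termwise in \eqref{DCBEC} and reorganising the triple sum in the breakage term by Fubini and \eqref{LMC}, each contribution can be majorised by expressions of the form $B(i^\gamma+j^\gamma)(w_j+\tilde w_j)|w_i-\tilde w_i|$ and symmetric analogues; the restriction $1+\gamma\le\beta$ is exactly what ensures these series converge against the common $\beta$-moment bound. This yields $U'(t)\le C\,U(t)$, whence $U\equiv 0$ by Gronwall. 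The main obstacle I anticipate is the reorganisation of the collisional breakage gain term, both in the $\beta$-moment estimate and in the uniqueness argument: the index-dependent fragment distribution $N_{j-k,k}^i$ must be manipulated so that \eqref{LMC} can absorb the weights $i^\beta$ and $1+i^\gamma$ into powers of $j$ via $i\le j$, and matching this bookkeeping with the absolute convergence of the resulting series is the most delicate part of the calculation.
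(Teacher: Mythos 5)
Your existence argument follows essentially the same route as the paper: truncation, the uniform $\beta$-moment bound obtained from $\sum_{s} s^{\beta}N^{s}_{j,k}\le (j+k)^{\beta}$ together with the superadditivity inequality and Gronwall, equiboundedness and equicontinuity of each $w_i^n$, a diagonal Arzel\`a--Ascoli extraction, and tail control by the $\beta$-moment to pass to the limit. (The paper reaches the pointwise bound on $w_i^n$ via a comparison lemma with an auxiliary quadratic system rather than directly from the first-moment bound, but your direct route is sound; note, however, that to conclude the limit is a \emph{classical} solution you still need to check that $t\mapsto\sum_j a_{i,j}w_j(t)$ and the breakage series are continuous so the limiting integral identity can be differentiated --- the paper devotes a separate step to this.)

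The uniqueness part has a genuine gap: the functional $U(t)=\sum_i(1+i^{\gamma})\,|u_i(t)|$, $u_i=w_i-\tilde w_i$, cannot be closed into a Gronwall inequality because of the collisional breakage gain term. After Fubini that term contributes
\begin{equation*}
\frac12\sum_{j,k}\Bigl(\sum_{i=1}^{j+k-1}(1+i^{\gamma})\sgn(u_i)\,N^{i}_{j,k}\Bigr)(1-p_{j,k})\,a_{j,k}\,\bigl[u_j w_k+\tilde w_k u_j\bigr]+\cdots,
\end{equation*}
and the only control \eqref{LMC} gives on the inner sum is $\sum_i(1+i^{\gamma})N^{i}_{j,k}\le 2\sum_i iN^{i}_{j,k}=2(j+k)$; even using \eqref{FNP} the sum can be of order $(j+k)^{(1+\gamma)/2}$ (take $N^{s}_{j,k}=\alpha_0$ for all $s$ up to order $\sqrt{j+k}$), so it is never bounded by a constant multiple of $1+j^{\gamma}+k^{\gamma}$. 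Multiplying by $a_{j,k}\le B(j^{\gamma}+k^{\gamma})$ then produces contributions like $j^{1+\gamma}|u_j|\mathcal{M}_0(w)$, and $\sum_j j^{1+\gamma}|u_j|$ is not majorised by $U(t)$, so no inequality of the form $U'\le CU$ follows. The weight must be exactly linear: with $\rho(t)=\sum_i i|u_i(t)|$ the mass identity $\sum_i iN^{i}_{j,k}=j+k$ makes the signum combination $\sum_i i\sgn(u_i)N^{i}_{j,k}-j\sgn(u_j)-k\sgn(u_k)$ telescope, so that multiplied by $u_j$ it is at most $2k|u_j|$, after which $a_{j,k}\le B(j^{\gamma}+k^{\gamma})$ with $1+\gamma\le\beta$ closes the estimate against $\mathcal{M}_1+\mathcal{M}_{\beta}$. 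This is how the paper argues; note also that a naive termwise absolute-value bound fails even with weight $i$, so the signum cancellation against the loss term is not optional.
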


\begin{remark}
A typical example satisfying \eqref{FNP} is given by 
\begin{align*}
N_{i,j}^s := (\nu +2) \frac{s^{\nu}}{(i+j)^{\nu + 1}}, \hspace{.2cm} (i,j) \in \mathbb{N}^2,\hspace{.2cm}  s=1,2,...i+j-1 \hspace{.2cm}  \text{and} -1 < \nu \leq 0.
\end{align*}

Clearly \eqref{FNP} holds for $\alpha_0 = 2$.
\end{remark}

Now, in the next section, the existence of solutions to \eqref{DCBEC}--\eqref{IC} is shown by passing to the limit of solutions to the truncated finite dimensional system of \eqref{DCBEC}--\eqref{IC}. Most of the technical tools which  have been utilised to prove the main theorem were adapted from \cite{PBD 77} and \cite{SIW 1989}, with appropriate modifications.

\section{\textbf{Existence of Classical Solution}}

 In this section we established the existence of solution for the truncated system of \eqref{DCBEC}--\eqref{IC}. For $ n \geq 2$, consider the following truncated system of $n$  ordinary differential equations,
\begin{align}
\frac{d}{dt}w_i^n =& \frac{1}{2}\sum_{j=1}^{i-1} p_{j,i-j} a_{j,i-j} w_j^n w_{i-j}^n -\sum_{j=1}^{n-i} a_{i,j} w_i^n w_j^n \nonumber \\
&+ \frac{1}{2} \sum_{j=i+1}^{n} \sum_{k=1}^{j-1} N_{j-k,k}^i(1- p_{j-k,k}) a_{j-k,k} w_{j-k}^n w_k^n, \label{Trunc} \\
\text{with truncated initial data} \nonumber\\
 w_i^n(0) &= w_i^0, \label{TruncIC}
\end{align}
 for $ i \in \{1,...,n\}.$
 
Proceeding as in \cite{BALL 90}, Lemmas 2.1 and 2.2 we obtain the following result.

 \begin{theorem}\label{LET}
 Let $0<T <+\infty $ be given and the collision kernel satisfies assumption \ref{ASSUM}. Then for each $n \geq 2$  the system \eqref{Trunc}--\eqref{TruncIC} has a unique solution $w^n=\{w_i^n\}_{i=1}^{n} \in \mathcal{C}^1([0,T],\mathbb{R}^n) $.
 Furthermore, the truncated mass conservation law is also satisfied, i.e.
\begin{align}\label{TMC}
\sum_{i=1}^{n} i w_i^n(t) = \sum_{i=1}^{n} i w_i^n(0) \hspace{.3cm} \text{for all} \hspace{.3cm} t \in [0,T].
\end{align}
\end{theorem}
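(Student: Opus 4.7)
The system \eqref{Trunc}--\eqref{TruncIC} is a finite system of ODEs in $\mathbb{R}^n$ whose right-hand side is a polynomial (in fact quadratic) in the coordinates $(w_1^n,\ldots,w_n^n)$. Thus the vector field is $\mathcal{C}^\infty$ and locally Lipschitz. By the classical Cauchy--Lipschitz (Picard--Lindel\"of) theorem, I would first obtain a unique maximal $\mathcal{C}^1$ solution $w^n$ on some interval $[0,T_n^*)$ with $T_n^* \in (0,+\infty]$. To promote this to a solution on all of $[0,T]$, I need an a priori bound preventing blow-up, and the natural such bound comes from mass conservation, which in turn requires non-negativity.

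For non-negativity, the plan is a boundary argument. Suppose, for contradiction, that the set $S=\{t\in[0,T_n^*):\ w_j^n(t)<0\ \text{for some } j\}$ is nonempty and let $t_0=\inf S$; by continuity and $w_i^0\ge 0$, all components satisfy $w_j^n(t_0)\ge 0$, and some index $i_0$ satisfies $w_{i_0}^n(t_0)=0$. Evaluating \eqref{Trunc} at $t_0$, the loss term vanishes because it carries a factor $w_{i_0}^n$, while the two gain terms are non-negative because they are finite sums of products of non-negative quantities. Hence $\frac{d}{dt}w_{i_0}^n(t_0)\ge 0$, contradicting the fact that $w_{i_0}^n$ must strictly decrease through zero. (Alternatively, one can view the $i$-th equation as $\dot w_i^n + c_i(t)w_i^n=f_i(t)$ with $c_i,f_i\ge 0$ on any interval of non-negativity and use Duhamel's formula.)

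For mass conservation \eqref{TMC}, I would multiply \eqref{Trunc} by $i$, sum $i=1,\ldots,n$, and rearrange the three double/triple sums by changing the summation order. Setting $l=j$, $m=i-j$ in the coagulation gain produces $\frac12\sum_{l+m\le n}(l+m)\,p_{l,m}a_{l,m}w_l^n w_m^n$; symmetrizing the loss term in $(i,j)$ gives $-\frac12\sum_{i+j\le n}(i+j)a_{i,j}w_i^n w_j^n$; and for the collisional breakage gain, substituting $l=j-k$, $m=k$ and swapping the $i$-sum to the inside yields
\begin{align*}
\frac12\sum_{\substack{l,m\ge 1\\ l+m\le n}}(1-p_{l,m})a_{l,m}w_l^n w_m^n\sum_{s=1}^{l+m-1}s\,N_{l,m}^s
=\frac12\sum_{\substack{l,m\ge 1\\ l+m\le n}}(l+m)(1-p_{l,m})a_{l,m}w_l^n w_m^n,
\end{align*}
by \eqref{LMC}. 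Adding the three pieces, the $p_{l,m}$ and $(1-p_{l,m})$ terms combine and cancel exactly against the loss term, giving $\frac{d}{dt}\sum_{i=1}^n i w_i^n(t)=0$.

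Once \eqref{TMC} is established (on $[0,T_n^*)$), each $w_i^n(t)\le\frac{1}{i}\sum_{k=1}^n k w_k^0\le\sum_{k=1}^n k w_k^0$ on $[0,T_n^*)$, so the solution remains in a bounded region of $\mathbb{R}^n$ and cannot escape to infinity in finite time; the standard extension theorem then gives $T_n^*=+\infty$, and in particular the solution is defined on $[0,T]$. Uniqueness is inherited from the local Picard theorem. The main bookkeeping obstacle is the index manipulation in the mass-conservation step (especially identifying the correct domain after the substitution $l=j-k,m=k$); non-negativity and the subsequent a priori bound are then short consequences.
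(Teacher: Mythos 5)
Your overall route is exactly the one the paper takes: the paper gives no details and simply says ``proceeding as in \cite{BALL 90}, Lemmas 2.1 and 2.2,'' and the Ball--Carr argument is precisely local Picard--Lindel\"of for the quadratic vector field, non-negativity, mass conservation by reindexing the double and triple sums against \eqref{LMC}, and then the a priori bound $0\le w_i^n\le \sum_k k\,w_k^0$ to rule out finite-time blow-up. Your index bookkeeping for \eqref{TMC} is correct (after the substitution $l=j-k$, $m=k$ the inner $i$-sum runs over $1\le i\le l+m-1$, which is exactly the range in \eqref{LMC}, so the breakage gain contributes $\tfrac12\sum_{l+m\le n}(l+m)(1-p_{l,m})a_{l,m}w_l^nw_m^n$ as you claim), and the extension to $[0,T]$ is standard.

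The one step that does not hold up as written is the non-negativity argument. From $w_{i_0}^n(t_0)=0$ and $\tfrac{d}{dt}w_{i_0}^n(t_0)\ge 0$ you conclude a contradiction with ``$w_{i_0}^n$ must strictly decrease through zero,'' but no such strict decrease is forced: at the first touching time one only gets $\tfrac{d}{dt}w_{i_0}^n(t_0)\le 0$ from the definition of $t_0$, hence the derivative is exactly zero, and a function can have a zero value and zero derivative at $t_0$ and still become negative immediately afterwards. The standard repair (and what Ball--Carr actually do) is to modify the system by replacing $w_j^n$ with $(w_j^n)_+$ in the gain terms and in the $w_j^n$ factor of the loss term, keep the bare factor $w_i^n$ in the loss term, solve the modified system, and use Duhamel,
\begin{align*}
w_i^n(t)=w_i^0\exp\Bigl(-\int_0^t L_i(s)\,ds\Bigr)+\int_0^t \exp\Bigl(-\int_s^t L_i(\sigma)\,d\sigma\Bigr)G_i(s)\,ds ,
\end{align*}
where $G_i\ge 0$ unconditionally because it is built from positive parts; this gives $w_i^n\ge 0$ everywhere, whence the positive parts can be removed and uniqueness identifies this solution with that of \eqref{Trunc}--\eqref{TruncIC}. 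Your parenthetical Duhamel remark is the right idea but is circular as phrased (``on any interval of non-negativity'' presupposes what is to be proved); the positive-part truncation removes that circularity. With this patch your proof is complete and coincides with the one the paper invokes.
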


From Theorem \ref{LET}, for each $n\geq 2$,  we obtain that, $w^n$ is unique non negative classical solution of \eqref{Trunc}--\eqref{TruncIC}. Next we define the zero extension of each solution $w^n$, i.e.
 \begin{align}
 \tilde{w}^n_i(t) =
\begin{cases}
  w_i^n(t),  &1\leq i\leq n \\ 
    0,         &i>n.     
\end{cases}
\end{align}
 
For the rest of the paper, we will drop the $\tilde{.}$ notation. With this convention $w^n=\{w_i^n\}_{i\in \mathbb{N}}$ satisfies the equation 
\begin{align}
\frac{dw_i^n}{dt}  =& \frac{1}{2}\sum_{j=1}^{i-1} p_{j,i-j} a_{j,i-j} w_j^n w_{i-j}^n -\sum_{j=1}^{\infty} a_{i,j} w_i^n w_j^n \nonumber \\
&+ \frac{1}{2} \sum_{j=i+1}^{\infty} \sum_{k=1}^{j-1} N_{j-k,k}^i(1- p_{j-k,k}) a_{j-k,k} w_{j-k}^n w_k^n, \label{DCBECTIL}\\
w_i^n(0) &= w_i^0 ,\label{ICTIL}
\end{align}

It follows from Theorem \ref{LET} that there exists a non-negative unique classical solution $w^n(t)=\{w_i^n(t)\}_{i\in \mathbb{N}}$ to \eqref{DCBECTIL}--\eqref{ICTIL} in  $\Omega_{\beta}^+(T)$ which satisfies the mass conservation law. And for this solution we define the $r$-th moment as
\begin{align*}
\mathcal{M}_{r}(w^n(t))=\mathcal{M}_{r}^n(t) := \sum_{i=1}^{\infty}i^r w_i^n(t).
\end{align*}

Now, we prove a number of results that are essential for the proof of the main theorem.

\subsection{\textbf{Uniform boundedness of approximated moments}}
Fix  $T \in (0,\infty)$ and $w^0 \in \Omega_{\beta}^+(0)$. 
Multiplying equation \eqref{DCBECTIL} with the weight $g_i$ and taking the summation over $i$, we get
\begin{align}
\sum_{i=1}^{\infty} g_i \frac{dw_i^n }{dt} = &\frac{1}{2}\sum_{i=1}^{\infty} \sum_{j=1}^{\infty}  ( g_{i+j} - g_i - g_j) a_{i,j} w_i^n w_j^n  \nonumber\\
& -\frac{1}{2} \sum_{i=1}^{\infty} \sum_{j=1}^{\infty} (1-p_{i,j}) \Big(g_{i+j} - \sum_{s=1}^{i+j-1} g_s N_{i,j}^s  \Big) a_{i,j} w_i^n w_j^n. \label{GME}
\end{align}

Putting $g_i = i $ in \eqref{GME} and using \eqref{LMC} gives
\begin{align*}
\frac{d \mathcal{M}_{1}^n(t)}{dt}=0.
\end{align*}
After integrating with respect to $t$, we obtain 
\begin{align}
\mathcal{M}_{1}^n(t) = \sum_{i=1}^{\infty}i w_i^n(t) =  \sum_{i=1}^{\infty}i w_i^n(0) \leq \sum_{i=1}^{\infty}i w_i^0  = \Lambda_1. \label{FM}
\end{align}
Next, taking $g_i = i^{\beta} $ in \eqref{GME}, we find that
\begin{align*}
\sum_{i=1}^{\infty} i^{\beta}\frac{dw_i^n }{dt} =& \frac{d}{dt}\sum_{i=1}^{\infty} i^{\beta} w_i^n =\frac{1}{2}\sum_{i=1}^{\infty} \sum_{j=1}^{\infty}  ( (i+j)^{\beta} - i^{\beta} - j^{\beta}) a_{i,j} w_i^n w_j^n  \nonumber\\
& -\frac{1}{2} \sum_{i=1}^{\infty} \sum_{j=1}^{\infty} (1-p_{i,j}) \Big((i+j)^{\beta} - \sum_{s=1}^{i+j-1} s^{\beta} N_{i,j}^s  \Big) a_{i,j} w_i^n w_j^n.  \nonumber 
\end{align*}
Thanks to the inequality 
\begin{align*}
 \sum_{s=1}^{i+ j-1} s^{\beta} N_{i,j}^s \leq  (i+j)^{\beta},
 \end{align*}
 which is applied in the last term on the right-hand side of above equation and then from \eqref{ASSUM}, we infer that 
\begin{align}
\frac{d}{dt}\sum_{i=1}^{\infty} i^{\beta} w_i^n \leq \frac{A}{2}\sum_{i=1}^{\infty} \sum_{j=1}^{\infty}  ( (i+j)^{\beta} - i^{\beta} - j^{\beta}) (i+j) w_i^n w_j^n.  \label{BEFINEQ}
\end{align}
Further, let us utilize the following inequality  from \cite[Lemma 2.3]{CARR 92}, there is a constant $\jmath_{\beta}>0$ that depends only on $\beta$, such that 
 \begin{align*}
 (i+j)((i+j)^{\beta} - i^{\beta} - j^{\beta}) \leq \jmath_{\beta} ( ij^{\beta} + i^{\beta} j), 
 \end{align*}
in \eqref{BEFINEQ} to obtain
\begin{align*}
\frac{d}{dt}\sum_{i=1}^{\infty} i^{\beta} w_i^n \leq& \frac{A\jmath_{\beta}}{2}\sum_{i=1}^{\infty} \sum_{j=1}^{\infty}   ( ij^{\beta} + i^{\beta} j) w_i^n w_j^n,\\
\end{align*}
which can be rewritten as
\begin{align}
\frac{d}{dt} \mathcal{M}_{\beta}^n(t) &\leq A\jmath_{\beta} \mathcal{M}_{\beta}^n(t) \mathcal{M}_1^n(t).\label{betammnteq}
\end{align}

On integrating the above inequality and using \eqref{FM}, we obtain
\begin{align}\label{betammnt}
\mathcal{M}_{\beta}^n(t) \leq \Lambda_{\beta}(T),
\end{align}
where $ \Lambda_{\beta}(T) := \mathcal{M}_{\beta}(0) \exp (A \jmath_{\beta}  \Lambda_1 T)$.\\

Now we will show that $w_i^n(t)$ is uniformly bounded for each $i \in \mathbb{N}$ and $ t \in [0,T]$, where $T<\infty $  is arbitrary.

\subsection{\textbf{Uniform boundedness of approximated solutions}}
 
 Due to non negativity of second term in equation \eqref{DCBECTIL}, we get
 \begin{align*}
\frac{d}{dt}w_i^n  &\leq  \frac{1}{2}\sum_{j=1}^{i-1} p_{j,i-j} a_{j,i-j} w_j^n w_{i-j}^n 
+ \frac{1}{2} \sum_{j=i+1}^{\infty} \sum_{k=1}^{j-1} N_{j-k,k}^i(1- p_{j-k,k}) a_{j-k,k} w_{j-k}^n w_k^n\\
&\leq  \frac{1}{2}\sum_{j=1}^{i-1} p_{j,i-j} a_{j,i-j} w_j^n w_{i-j}^n 
+ \frac{1}{2} \sum_{j=2}^{\infty} \sum_{k=1}^{j-1} N_{j-k,k}^i(1- p_{j-k,k}) a_{j-k,k} w_{j-k}^n w_k^n.
\end{align*}
Next, we infer from Fubini's theorem that 
\begin{align*}
\frac{d}{dt}w_i^n 
&\leq \frac{1}{2}\sum_{j=1}^{i-1} p_{j,i-j} a_{j,i-j} w_j^n w_{i-j}^n 
+ \frac{1}{2} \sum_{k=1}^{\infty} \sum_{j=1}^{\infty} N_{j,k}^i a_{j,k} w_{j}^n w_k^n.
\end{align*}

From \eqref{ASSUM}, \eqref{FNP} and \eqref{FM} it follows  that
\begin{align}
\frac{d}{dt}w_i^n  &\leq  \frac{A}{2}\sum_{j=1}^{i-1} i w_j^n w_{i-j}^n +  A \alpha_0 \Lambda_1^2 .\label{WENQ}
\end{align}

Let the upper equation of \eqref{WENQ} be
\begin{align}
\frac{d}{dt}x_i  &= \frac{A}{2}\sum_{j=1}^{i-1}  i x_j x_{i-j} +  A \alpha_0 \Lambda_1^2, \label{XEQ}\\ 
x_i(0) &= x_i^0, \label{XEQIC}
\end{align}
for $ i \in \mathbb{N} $.
\begin{lemma}\label{WLEQX}
Let $w^n(t) = (w_i^n(t))_{i\in \mathbb{N}}$ and  $x(t) = (x_i(t))_{i\in \mathbb{N}}$ be unique solution to \eqref{DCBECTIL}--\eqref{ICTIL} and \eqref{XEQ}--\eqref{XEQIC} respectively. Then  for each $i\in \mathbb{N}$, $w_i^n(t) \leq x_i(t)$, provided $ w_i^n(0)\leq x_i(0).$
\end{lemma}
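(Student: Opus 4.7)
The plan is to prove the lemma by induction on $i$, exploiting the triangular structure of \eqref{XEQ}: the right-hand side of the equation for $x_i$ depends only on $x_1,\ldots,x_{i-1}$ (plus a constant forcing term), so both existence and the comparison estimate propagate cleanly from smaller to larger indices. Throughout, I would use that $w_k^n(t)\ge 0$ for all $k$ and $t\in[0,T]$ (Theorem \ref{LET}) together with a companion induction giving $x_k\ge 0$: starting from $x_1(0)\ge w_1^n(0)\ge 0$ and $\dot x_1=A\alpha_0\Lambda_1^2\ge 0$, one has $x_1\ge 0$; assuming $x_j\ge 0$ for $j<i$, the right-hand side of \eqref{XEQ} is nonnegative, so $x_i\ge 0$ as well.

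For the base case $i=1$, the sum $\sum_{j=1}^{0}$ is empty. Thus \eqref{XEQ} reduces to $\dot x_1=A\alpha_0\Lambda_1^2$, while \eqref{WENQ} gives $\dot w_1^n\le A\alpha_0\Lambda_1^2$. Subtracting yields $\tfrac{d}{dt}(x_1-w_1^n)\ge 0$ on $[0,T]$, and the hypothesis $x_1(0)\ge w_1^n(0)$ then forces $w_1^n(t)\le x_1(t)$ for every $t\in[0,T]$.

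For the inductive step, suppose $w_k^n(t)\le x_k(t)$ on $[0,T]$ for every $k\in\{1,\ldots,i-1\}$. By the nonnegativity observed above, $x_j\,x_{i-j}\ge w_j^n\,w_{i-j}^n$ for each $j\in\{1,\ldots,i-1\}$. Substituting this termwise into \eqref{XEQ} and subtracting \eqref{WENQ}, I obtain
\begin{equation*}
\frac{d}{dt}\bigl(x_i-w_i^n\bigr)\ge \frac{Ai}{2}\sum_{j=1}^{i-1}\bigl(x_j\,x_{i-j}-w_j^n\,w_{i-j}^n\bigr)\ge 0,
\end{equation*}
on $[0,T]$. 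Integrating from $0$ to $t$ and using $x_i(0)\ge w_i^n(0)$ closes the induction and gives $w_i^n(t)\le x_i(t)$ for all $t\in[0,T]$.

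There is no serious obstacle; the argument is a standard ODE comparison, made particularly clean by the fact that the (nonnegative) loss term in \eqref{DCBECTIL} was discarded before forming the majorant, so no sign issues arise in the differential inequality \eqref{WENQ}. The only mildly delicate point is verifying nonnegativity of $x_i$ so that $x_j\,x_{i-j}\ge w_j^n\,w_{i-j}^n$ is justified; this is handled by the companion induction indicated in the first paragraph.
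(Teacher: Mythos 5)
Your proposal is correct and follows essentially the same route as the paper: induction on $i$, comparing the differential inequality \eqref{WENQ} for $w_i^n$ with the defining equation \eqref{XEQ} for $x_i$, and integrating the resulting inequality $\frac{d}{dt}(x_i - w_i^n)\ge 0$. Your explicit companion induction establishing $x_i\ge 0$ (needed to justify $w_j^n w_{i-j}^n \le x_j x_{i-j}$ termwise) is a point the paper leaves implicit, so your write-up is if anything slightly more complete.
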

\begin{proof}
For $i=1$, from \eqref{WENQ}, we obtain
\begin{align*}
\frac{d}{dt}w_1^n(t) &\leq  A \alpha_0 \Lambda_1^2,\\
w_1^n(t) & \leq w_1^n(0) +  A \alpha_0 \Lambda_1^2 t \leq x_i(0) +  A\alpha_0 \Lambda_1^2 t = x_1 (t).
\end{align*}
For $ i=2 $, we have 
\begin{align*}
\frac{d}{dt}w_2^n(t) & \leq \frac{1}{2} \sum_{j=1}^{1}  a_{j,2-j} w_j^n w_{2-j}^n +   A\alpha_0 \Lambda_1^2 \leq \frac{A}{2} w_1^n(t) w_1^n(t)+   A \alpha_0 \Lambda_1^2 \leq \frac{A}{2} ( x_1(t)^2 +2\alpha_0 \Lambda_1^2),\\
w_2^n(t) &\leq \int_0^t  \frac{A}{2} ( x_1(s)^2 +2 \alpha_0 \Lambda_1^2) ds + w_2^n(0) \leq \frac{A}{2} \int_0^t   ( x_1(s)^2 + 2 \alpha_0 \Lambda_1^2) ds +x_2(0) =x_2(t).
\end{align*}
Assume that the assertion is  true for $i = 1, 2, . . . , k$, i.e., if $w_i^n(0) \leq x_i(0)$, then $w_i^n
(t) \leq  x_i(t)$ for $i = 1, 2, . . . , k,$
\begin{align*}
\frac{d}{dt}w_{k+1}^n(t) \leq \frac{A}{2} \sum_{j=1}^{k} (k+1) w_j^n(t) w_{k+1-j}^n(t) \leq  \frac{d}{dt}x_{k+1}(t).
\end{align*}
On  integrating
$$ w_{k+1}^n(t) \leq x_{k+1}(t),$$
which demonstrates that the assumed hypothesis is also valid for $ i = k+1$. As a consequence of the
principle of  mathematical induction, the proof of Lemma \ref{WLEQX} is complete.
\end{proof}

\begin{remark}\label{Remxbd}
It is worth to mention that by replacing the inequality in the proof of Lemma \ref{WLEQX} by equality for $x_i$ in place of $w_i^n$, it can easily be shown that the solution of \eqref{XEQ}--\eqref{XEQIC}, $x=(x_i(t))_{i\in\mathbb{N}}$, for each $i \in \mathbb{N}$ is bounded on $[0,T]$.
\end{remark}
It follows from Lemma \ref{WLEQX} and Remark \ref{Remxbd} that for each $i \in \mathbb{N}$, $w_i^n(t)$ is uniformly bounded on $[0,T]$ i.e.
\begin{align}\label{WUB}
|w_i^n(t)| \leq W_i(T).
\end{align}
We will now show the time equicontinuity  for applying the Arzela–Ascoli theorem \cite{IIV 1995} .

\subsection{\textbf{Time Equicontinuity}} 
To show that for each  $i\in \mathbb{N}$, $w_i^n(t)$ is equicontinuous on $[0,T]$, we exhibit that for each $\epsilon >0$, $\exists$ a $\delta(\epsilon)$ such that
\begin{align*}
(t -\tau) < \delta(\epsilon) \hspace{.7cm} \text{implies} \hspace{.7cm} |w_i^n(t) - w_i^n(\tau)| < \epsilon,
\end{align*}
where $0\leq \tau <t \leq T$.
It follows from equation \eqref{DCBECTIL} that
\begin{align}
|w_i^n(t) -w_i^n(\tau)| =& \Big | \int_{\tau}^t \Big[\frac{1}{2}\sum_{j=1}^{i-1} p_{j,i-j} a_{j,i-j} w_j^n(s) w_{i-j}^n(s) -\sum_{j=1}^{\infty} a_{i,j} w_i^n(s) w_j^n(s) \nonumber \\
&+ \frac{1}{2} \sum_{j=i+1}^{\infty} \sum_{k=1}^{j-1} N_{j-k,k}^i(1- p_{j-k,k}) a_{j-k,k} w_{j-k}^n(s) w_k^n(s) \Big] ds\Big| \nonumber \\
& \leq  \frac{1}{2} \int_{\tau}^t \frac{1}{2}\sum_{j=1}^{i-1} p_{j,i-j} a_{j,i-j} w_j^n(s) w_{i-j}^n(s) ds + \int_{\tau}^t \sum_{j=1}^{\infty} a_{i,j} w_i^n(s) w_j^n(s)ds \nonumber \\
&+ \frac{1}{2} \int_{\tau}^t \sum_{j=i+1}^{\infty} \sum_{k=1}^{j-1} N_{j-k,k}^i (1- p_{j-k,k}) a_{j-k,k} w_{j-k}^n(s) w_k^n(s)  ds \nonumber \\
&:= \vartheta_1^i+ \vartheta_2^i + \vartheta_3^i .\label{H}
\end{align}

 With the help of \eqref{ASSUM} and \eqref{WUB}, $\vartheta_1^i$ can be evaluated  as
\begin{align}
\vartheta_1^i =&\frac{1}{2}\int_{\tau}^t \sum_{j=1}^{i-1} p_{j,i-j} a_{j,i-j} w_j^n(s) w_{i-j}^n(s)ds\nonumber\\ 
&\leq \frac{A}{2}\int_{\tau}^t \sum_{j=1}^{i-1} i w_j^n(s) w_{i-j}^n(s)ds  \nonumber \\
&\leq \frac{A}{2}  \int_{\tau}^t \sum_{j=1}^{i-1}i W_j(T) W_{i-j}(T) ds \nonumber \\
& =\frac{A}{2} V_i(T) (t-\tau), \label{H1}
\end{align}
where 
\begin{align*}
V_i(T) := \sum_{j=1}^{i-1}i W_j(T) W_{i-j}(T).
\end{align*}

Similarly, we  obtain
\begin{align}
\vartheta_2^i &= \int_{\tau}^t \sum_{j=1}^{\infty} a_{i,j} w_i^n(s) w_j^n(s)ds
\leq A W_i(T) (i+1) \Lambda_1 (t-\tau). \label{H2}
\end{align}
Next, we deduce from \eqref{ASSUM}, \eqref{FNP} and \eqref{FM} that 
\begin{align}
\vartheta_3^i &= \frac{1}{2} \int_{\tau}^t \sum_{j=i+1}^{\infty} \sum_{k=1}^{j-1} N_{j-k,k}^i(1- p_{j-k,k}) a_{j-k,k} w_{j-k}^n(s) w_k^n(s)  ds \nonumber\\
&\leq  A \alpha_0  \int_{\tau}^t \sum_{j=2}^{\infty} \sum_{k=1}^{j-1} j w_{j-k}^n(s) w_k^n(s)  ds, \nonumber\\
& \leq 2A \alpha_0 \Lambda_1^2 (t-\tau).\label{H3}
\end{align}
Finally, substituting  the estimates  \eqref{H1}, \eqref{H2} and  \eqref{H3} into  \eqref{H} yields
\begin{align*}
|w_i^n(t) -w_i^n(\tau)| & \leq \Gamma_i (t-\tau),
\end{align*}
 where $$ \Gamma_i :=A \Big(\frac{V_i(T)}{2} + W_i(T) (i+1) \Lambda_1 + 2 \alpha_0 \Lambda_1^2\Big). $$
By choosing  $\delta \leq \frac{\epsilon}{\Gamma_i}$, we obtain the equicontinuity of $w_i^n(t)$ for all $n\geq 2$ and for each fixed $i\in \mathbb{N}$ on  $[0,T].$

Now invoking the Arzela–Ascoli theorem, which warrants  that for each fixed $i\in \mathbb{N}$, the sequence $\{w_i^n\}_{n=2}^{\infty}$ is relatively compact on $\mathcal{C}([0,T], \mathbb{R})$. For $i=1$, we choose a subsequence $w_1^{n_m} \to w_1$ as $m \to \infty$; then for $i=2$, we choose a subsequence $w_2^{n_k} \to w_2$ as $k \to \infty$; and so on. Finally  applying the diagonal process for each $ i\in \mathbb{N}$, we get  a subsequence $ \{w_i^{l}\}_{l=2}^{\infty}$  and a function $ w_i  \in \mathcal{C}([0,T], \mathbb{R})$, such that 
 \begin{align}
  \lim_{l \to \infty}w_i^{l}(t) = w_i(t),  \label{SUBSEQCONVERG}
  \end{align}  
uniformly on $ [0,T]$.
We  now aim to show that the sequence $w(t) = \{w_i(t) \}_{i\in \mathbb{N}}$ is actually a solution of \eqref{DCBEC}--\eqref{IC}.

\subsection{\textbf{Proof of main Theorem}}
Since  the  convergence of subsequence \eqref{SUBSEQCONVERG} stipulates  that for an arbitrary $\epsilon > 0$ there exists $l\in \mathbb{N}$ such that
\begin{align*}
\sum_{i=1}^{\infty} i^{\beta} |w_i(t)- w_i^{l}(t) | < \epsilon.
\end{align*}
As $\epsilon$  and $l$ are arbitrary and owning to  \eqref{betammnt},  we can easily  obtain
\begin{align}\label{WOMEGABETA}
\sum_{i=1}^{\infty} i^{\beta}  w_i \leq \Lambda_{\beta}(T).
\end{align}
Since $1< \beta \leq 2$, hence \eqref{WOMEGABETA} shows that $w =(w_i)_{i\in \mathbb{N}}\in \Omega_{\beta}^+(T)$. In order to prove that $w$ is a solution  of \eqref{DCBEC}--\eqref{IC}, let us consider the following equation with the help of \eqref{DCBEC} and \eqref{DCBECTIL} as 

\begin{align}
\big(w_i^{l}-w_i \big) + w_i =& w_i^0 + \int_0^t \Bigg[ \frac{1}{2} \sum_{j=1}^{i-1}a_{j,i-j} p_{j,i-j} \big(w_{i-j}^{l}(s)-w_{i -j}(s)\big) w_{j}^{l}(s)\nonumber\\
  &+ \frac{1}{2}\sum_{j=1}^{i-1}a_{j,i-j} p_{j,i-j} \big(w_{j}^{l}(s)-w_{j}(s)\big) w_{i-j}(s) \nonumber\\
& -\sum_{j=1}^{\infty} a_{i,j}\big( w_i^{l}(s) - w_i(s) \big) w_j^{l}(s)- \sum_{j=1}^{\infty} a_{i,j}\big(w_j^{l}(s) - w_j(s) \big) w_i(s) \nonumber\\
&+ \sum_{j=i+1}^{\infty} \sum_{k=1}^{j-1} N_{j-k,k}^i(1- p_{j-k,k}) a_{j-k,k}\big( w_{j-k}^{l}(s)-w_{j-k}(s)\big)  w_k^{l}(s) \nonumber\\
& + \sum_{j=i+1}^{\infty} \sum_{k=1}^{j-1} N_{j-k,k}^i(1- p_{j-k,k}) a_{j-k,k}\big( w_k^{l}(s)-w_{k}(s)\big) w_{j-k}(s)  \nonumber\\
&+ \frac{1}{2}\sum_{j=1}^{i-1} p_{j,i-j} a_{j,i-j} w_j(s) w_{i-j}(s)-\sum_{j=1}^{\infty} a_{i,j} w_i(s) w_j(s) \nonumber \\
& + \frac{1}{2} \sum_{j=i+1}^{\infty} \sum_{k=1}^{j-1} N_{j-k,k}^i(1- p_{j-k,k}) a_{j-k,k} w_{j-k}(s) w_k(s) \Bigg] ds. \label{CONVEQN}
\end{align}

As a consequence of   \eqref{FM}, \eqref{betammnt} and \eqref{SUBSEQCONVERG}, it is now easy to pass the limit as $l\to \infty$ in the terms ranging  over the finite sums. On the other hand, passing to the limit in an infinite series involves an additional justification. To overcome this difficulty, we must show that for a given $\epsilon >0 $, there exists a $\mathcal{J}(\epsilon)$ (large enough) such that the tails of infinite series goes to zero as $\epsilon \to 0$. 

  Let $\epsilon >0 $ be given. Then, we consider the tail of the fourth term on the right-hand side of  \eqref{CONVEQN},
\begin{align}
\Bigg|\sum_{j=\mathcal{J}(\epsilon)}^{\infty} a_{i,j} w_j^{l} \Bigg |&\leq A \sum_{j=\mathcal{J}(\epsilon)}^{\infty} (i+j) w_j^{l} \leq  \frac{Ai}{\mathcal{J}(\epsilon)}\sum_{j=\mathcal{J}(\epsilon)}^{\infty} j w_j^{l} +\frac{A}{\mathcal{J}(\epsilon)^{\beta-1}} \sum_{j=\mathcal{J}(\epsilon)}^{\infty} j^{\beta} w_j^l \nonumber \\
&\leq \frac{Ai \Lambda_1}{\mathcal{J}(\epsilon)} + \frac{A \Lambda_{\beta}(T)}{\mathcal{J}(\epsilon)^{\beta-1}}.
\end{align}
Similarly, the tail of the fifth term can be controlled as  
\begin{align}
\Bigg| \sum_{j=\mathcal{J}(\epsilon)}^{\infty} a_{i,j} \big(w_j^{l} - w_j\big) \Bigg|\leq \frac{2A \Lambda_1 i}{\mathcal{J}(\epsilon)} + \frac{2A  \Lambda_{\beta}(T)}{\mathcal{J}(\epsilon)^{\beta-1}}.
\end{align}

Moving on to  the sixth term on the right-hand side of \eqref{CONVEQN},
\begin{align*}
\Bigg| \sum_{j=i+1}^{\infty} \sum_{k=1}^{j-1}& \alpha_0(1- p_{j-k,k}) a_{j-k,k}\big( w_{j-k}^{l}(s)-w_{j-k}(s)\big)  w_k^{l}(s)\Bigg|\nonumber \\
 & \leq \alpha_0 \sum_{j=1}^{\infty}\sum_{k=1}^{\infty}  a_{j,k}\big| w_{j}^{l}(s)-w_{j}(s)\big|  w_k^{l}(s). \nonumber
\end{align*}
Let us now control the tail of the term on the right hand side in the above inequality as
\begin{align*}
\alpha_0 \sum_{k=1}^{\infty}\sum_{j=j=\mathcal{J}(\epsilon)}^{\infty}  a_{j,k}\big| w_{j}^{l}(s)-w_{j}(s)\big|  w_k^{l}(s)&
\leq A\alpha_0 \sum_{k=1}^{\infty}\sum_{j=j=\mathcal{J}(\epsilon)}^{\infty}(j+k)\big| w_{j}^{l}(s)-w_{j}(s)\big|  w_k^{l}(s)\\
&\leq  2A \alpha_0  \Lambda_1\Big(\frac{\Lambda_1}{\mathcal{J}(\epsilon)}+ \frac{\Lambda_{\beta}(T)}{\mathcal{J}(\epsilon)^{\beta-1}}\Big).
\end{align*}

We will select $\mathcal{J}(\epsilon)$ so that $ \max \Big(\frac{\Lambda_1 }{\mathcal{J}(\epsilon)},\frac{ \Lambda_{\beta}(T)}{\mathcal{J}(\epsilon)^{\beta-1}} \Big) \leq \epsilon $. Next  by taking first $ \epsilon \to 0$ and then  $l \to \infty $  on equation \eqref{CONVEQN}, we accomplish,

\begin{align}\label{IVDCBEC}
w_i(t) &= w_i^0 + \int_0^t \Bigg[\frac{1}{2}\sum_{j=1}^{i-1} p_{j,i-j} a_{j,i-j} w_j(s) w_{i-j}(s) -\sum_{j=1}^{\infty} a_{i,j} w_i(s) w_j(s) \nonumber\\
&+ \frac{1}{2} \sum_{j=i+1}^{\infty} \sum_{k=1}^{j-1} N_{j-k,k}^i(1- p_{j-k,k}) a_{j-k,k} w_{j-k}(s) w_k(s) \Bigg] ds.
\end{align}

Based on the above estimates and continuity of $w_i$, we can ensure that the right hand side of \eqref{DCBEC} is also a continuous function of $t\in [0,T].$ Since for each $i \geq 1$, the first term under the integral sign is a continuous function of $t$, whereas second and third term requires additional justification as they contains infinite series.

Thus to prove the continuity of integrand, we must show that the function 
\begin{align}
g(t) =\sum_{j=1}^{\infty} a_{i,j} w_j(t)\hspace{.2cm} \text{and} \hspace{.2cm} h(t) = \frac{1}{2} \sum_{j=i+1}^{\infty} \sum_{k=1}^{j-1} N_{j-k,k}^i(1- p_{j-k,k}) a_{j-k,k} w_{j-k}(t) w_k(t) \label{CF}
\end{align}
are continuous in $t$.
First, let 
\begin{align*}
g(t) = \sum_{i=1}^M a_{i,j} w_j(t) + \sum_{j=M+1}^{\infty} \beta_{i,j} w_j(t)
\end{align*}
Let $t_1, t_2\in [0,T]$ such that
\begin{align*}
\big| g(t_1) - g(t_2) \big| \leq \sum_{j=1}^M \beta_{i,j} |w_j(t_1)- w_j(t_2)| + \sum_{j=M+1}^{\infty} a_{i,j} w_j(t_1)+ \sum_{j=M+1}^{\infty} a_{i,j} w_j(t_2).
\end{align*}
Now invoking the continuity of $w_i$, for all $\epsilon>0$, there exists $N(\epsilon)$  and $\delta(\epsilon,N(\epsilon))$ such that if $|t_1-t_2| < \delta$, then we have 
\begin{align*}
 \sum_{j=1}^M a_{i,j} |w_j(t_1)- w_j(t_2)| < \frac{\epsilon}{3},
\end{align*}
and using \eqref{betammnt}, we conclude that
\begin{align*}
\sum_{j=M+1}^{\infty} a_{i,j} w_j(t_1) < \frac{\epsilon}{3}, \hspace{.4cm}\sum_{j=M+1}^{\infty} a_{i,j} w_j(t_2)< \frac{\epsilon}{3}.
\end{align*}
which shows that $ g$ is continuous. Similarly, we can show that $h$ is a continuous function of $t$.

Hence the integrand in \eqref{IVDCBEC} is continuous. Therefore differentiating equation \eqref{IVDCBEC} with the help of Leibniz's rule guarantee  that $w_i$ is a continuously differentiable solution to \eqref{DCBEC}--\eqref{IC} and finally, from \eqref{WOMEGABETA} we obtain that  $w= \{w_i\}_{i\in \mathbb{N}} \in \Omega_{\beta}^+(T)$.

In the next section, we will examine the uniqueness of classical solution to \eqref{DCBEC}--\eqref{IC}.

\section{\textbf{Uniqueness of classical  solution}}

 Let us assume that the collision kernel  satisfy \eqref{ASSUM1} and $w^0 \in \Omega_{\beta}^+(0)$. Let $w(t)=(w_i(t))_{i\in \mathbb{N}}$ and $v(t)=(v_i(t))_{i\in \mathbb{N}}$ in $\Omega_{\beta}^+(T)$ be two solutions to \eqref{DCBEC}--\eqref{IC} on $[0,T]$, where $T>0$ with the same initial condition $w^0=(w_i^0)_{i\in \mathbb{N}}$. Let $ u := w- v$.\\
 Define 
\begin{align} \label{rho}
 \rho(t) = \sum_{i=1}^{\infty} i |u_i(t)|, 
\end{align}
where 

\begin{align}
u_i(t)  =& w_i(t) -v_i(t) =\int_{0}^{t} \frac{1}{2} \sum_{j=1}^{i-1} p_{j,i-j} a_{j,i-j}[w_j(s) w_{i-j}(s) -v_j(s) v_{i-j}(s)]ds\nonumber \\
 &- \int_{0}^{t}\sum_{j=1}^{\infty} a_{i,j} [ w_i(s) w_j(s) - v_i(s) v_j(s)] ds \nonumber\\
& + \int_{0}^{t} \sum_{j=i+1}^{\infty} \sum_{k=1}^{j-1} (1-p_{j-k,k}) N_{j-k,k}^i a_{j-k,k}[ w_{j-k}(s)w_k(s) - v_{j-k}(s) v_k(s) ] ds.\label{c-d}
\end{align}

Substituting  equation \eqref{c-d} into \eqref{rho}, we get 
\begin{align}
\rho(t) =& \frac{1}{2}\int_{0}^{t}   \sum_{i=1}^{\infty} \sum_{j=1}^{i-1}i \sgn(u_i(s))  p_{j,i-j} a_{j,i-j}[w_j(s) w_{i-j}(s) -v_j(s) v_{i-j}(s)]ds \nonumber \\
&-\int_{0}^{t}\sum_{i=1}^{\infty} \sum_{j=1}^{\infty}i\sgn(u_i(s)) a_{i,j} [ w_i(s) w_j(s) - v_i(s) v_j(s)] ds  \nonumber\\
+&\frac{1}{2}\int_{0}^{t}  \sum_{i=1}^{\infty} \sum_{j=i+1}^{\infty} \sum_{k=1}^{j-1} 
   i \sgn(u_i(s)) (1-p_{j-k,k}) N_{j-k,k}^i a_{j-k,k}  \nonumber\\
   &\hspace{3.5cm}\times  [w_{j-k}(s)w_k(s) - v_{j-k}(s) v_k(s) ]ds. \nonumber
\end{align}

By repeated application of Fubini's theorem in the first and third terms on the right hand side of the preceding equation, and rearranging the indices in summation, we arrive
\begin{align}
\rho(t)  =& \frac{1}{2}\int_{0}^{t}   \sum_{i=1}^{\infty} \sum_{j=1}^{\infty}(i+j) \sgn(u_{i+j}(s)) p_{i,j} a_{i,j} [w_i(s)w_j(s) -v_i(s)v_j(s) ] ds\nonumber\\
&-\int_{0}^{t}\sum_{i=1}^{\infty} \sum_{j=1}^{\infty}i \sgn(u_i(s)) a_{i,j} [ w_i(s) w_j(s) - v_i(s) v_j(s)] ds \nonumber\\ 
&+ \frac{1}{2}\int_{0}^{t}  \sum_{k=1}^{\infty} \sum_{j=1}^{\infty} \Big(\sum_{i=1}^{j+k-1} i\sgn(u_i(s))N_{j,k}^i\Big) (1-p_{j,k}) a_{j,k}[ w_{j}(s)w_k(s) - v_{j}(s) v_k(s) ] ds. \label{rho u-v}
\end{align}
Note that 
$$w_i(s)w_j(s) -v_i(s)v_j(s)   = u_i(s)w_j(s) +v_i(s)u_j(s). $$
With the help of above identity and after rearranging the terms, \eqref{rho u-v} becomes,
\begin{align}
\rho(t)  =& \frac{1}{2}\int_{0}^{t}   \sum_{i=1}^{\infty} \sum_{j=1}^{\infty}[(i+j) \sgn(u_{i+j}(s)) - i \sgn (u_i(s)) - j \sgn(u_j(s))] p_{i,j} a_{i,j}  \nonumber\\
& \hspace{6cm}\times [u_i(s)w_j(s) +v_i(s)u_j(s) ] ds \nonumber\\
&+ \frac{1}{2}\int_{0}^{t}  \sum_{k=1}^{\infty} \sum_{j=1}^{\infty} \Big(\sum_{i=1}^{j+k-1} i\sgn(u_i(s))N_{j,k}^i - j\sgn(u_j(s) - k\sgn(u_k(s))\Big) (1-p_{j,k}) a_{j,k}\nonumber\\
& \hspace{6cm}\times [ w_{j}(s)u_k(s) + v_{k}(s) u_j(s) ] ds. \nonumber \\
\end{align}
This can be rewritten as 
\begin{align}
\rho(t) =& \frac{1}{2}\int_{0}^{t}   \sum_{i=1}^{\infty} \sum_{j=1}^{\infty}\mathcal{P}(i,j,s)  p_{i,j} a_{i,j} u_i(s)w_j(s)  ds +  \frac{1}{2}\int_{0}^{t}   \sum_{i=1}^{\infty} \sum_{j=1}^{\infty}\mathcal{P}(i,j,s)  p_{i,j} a_{i,j}v_i(s)u_j(s)\nonumber \\ 
&+ \frac{1}{2}\int_{0}^{t}  \sum_{k=1}^{\infty} \sum_{j=1}^{\infty}\mathcal{Q}(j,k,s) (1-p_{j,k}) a_{j,k} w_{j}(s)u_k(s)   ds \nonumber \\ 
&+\frac{1}{2}\int_{0}^{t}  \sum_{k=1}^{\infty} \sum_{j=1}^{\infty}\mathcal{Q}(j,k,s) (1-p_{j,k}) a_{j,k}  v_{k}(s) u_j(s):= \sum_{i=1}^4 \mathcal{R}_i(t), \label{R1234} 
\end{align}

where  
\begin{align*}
 \mathcal{P} (i,j,t) :=  (i+j) \sgn(u_{i+j}(s)) - i \sgn (u_i(s)) - j \sgn(u_j(s)), 
 \end{align*}
and
\begin{align*}
\mathcal{Q}(i,j,t) := \sum_{k=1}^{i+j-1} k \sgn(u_k(s))N_{i,j}^k - i \sgn(u_i(s) - j\sgn(u_j(s)).
\end{align*}

Using the properties of signum function, we can evaluate
\begin{align}
\mathcal{P}(i,j,t) u_i(t)& = [(i+j) \sgn(u_{i+j}(t)) - i \sgn (u_i(t)) - j \sgn(u_j(t))]u_i(t) \nonumber\\
& \leq [(i+j) -i +j] |u_i(t)|= 2j |u_i(t)|.\nonumber
\end{align}
Similar to preceding argument, we obtain
\begin{align*}
&\mathcal{P}(i,j,t) u_j(t)   \leq 2i |u_j(t)|,\hspace{.5cm} \mathcal{Q(}i,j,t) u_j(t) \leq 2i |u_j(t)| \\
\hspace{.3cm} &\text{and} \hspace{.3cm} \mathcal{Q}(i,j,t) u_i(t)  \leq 2j|u_i(t)|.
\end{align*}

 Let us evaluate the first term in \eqref{R1234} as
 \begin{align}
 \mathcal{R}_1(t)& = \frac{1}{2}\int_{0}^{t}   \sum_{i=1}^{\infty} \sum_{j=1}^{\infty}\mathcal{P}(i,j,s)  p_{i,j} a_{i,j} u_i(s)w_j(s)  ds \nonumber \\ 
 &  \leq \frac{B}{2}\int_{0}^{t}  \sum_{i=1}^{\infty} \sum_{j=1}^{\infty} 2j |u_i(s)| (i^{\gamma}+j^{\gamma}) w_j(s) ds \nonumber \\
  &\leq B \sup_{s\in[0,t]}(\mathcal{M}_{1}(s) + M_{1+\gamma}(s)) \int_{0}^{t}   \sum_{i=1}^{\infty} i |u_i(s)|  ds \nonumber \\
 &  \leq B \sup_{s\in[0,t]}(\mathcal{M}_{1}(s)+ \mathcal{M}_{\beta}(s)) \int_{0}^{t}   \rho(s)  ds.\nonumber
 \end{align}
 
Analogously, $\mathcal{R}_2(t), \mathcal{R}_3(t)$ and $ \mathcal{R}_4(t)$ can be estimated as
 \begin{align}
 \mathcal{R}_2(t) & \leq  B \sup_{s\in[0,t]}(\mathcal{M}_{1}(s)+ \mathcal{M}_{\beta}(s))  \int_{0}^{t}   \rho(s)  ds, \nonumber
 \end{align}
 
\begin{align}
\mathcal{R}_3(t) & \leq   B \sup_{s\in[0,t]}(\mathcal{M}_{1}(s)+ \mathcal{M}_{\beta}(s))  \int_{0}^{t}   \rho(s)  ds, \nonumber
 \end{align} 
 

 \begin{align}
\mathcal{R}_4(t)&\leq  B \sup_{s\in[0,t]}(\mathcal{M}_{1}(s)+ \mathcal{M}_{\beta}(s))    \int_{0}^{t}   \rho(s)  ds.\nonumber
 \end{align}

 Now gathering the estimates on $\mathcal{R}_1, \mathcal{R}_2, \mathcal{R}_3$, and $\mathcal{R}_4$ and inserting into \eqref{R1234} to obtain
 \begin{align}
 \rho(t) &\leq 4  B \sup_{s\in[0,t]}(\mathcal{M}_{1}(s)+ \mathcal{M}_{\beta}(s))  \int_{0}^{t}   \rho(s)  ds \nonumber \\
 & \leq  \Theta \int_{0}^{t}   \rho(s)  ds, \nonumber
 \end{align}
 where $\Theta=4 B (\Lambda_1 +\Lambda_{\beta}(T))$. Next, the application of Gronwall's inequality gives
 
 $$ \rho(t) \leq 0 \times \exp(\Theta T)=0,$$
 
which implies $w_i(t) = v_i(t) $ for $t \in [0,T]$.

Finally, we will show  the mass conservation property of the solution $w \in \Omega_{\beta}^+(T)$ to \eqref{DCBEC}--\eqref{IC}. 
\section{\textbf{Mass Conservation Property of the solution}}
In order to do so, it is enough to establish that the mass at any time $t$ is same as the  mass taken initially, i.e.,
 $$\mathcal{M}_1(t) = \mathcal{M}_1(0)=\Lambda_1.$$
 Multiplying equation \eqref{DCBEC} from $i$ and taking the summation from $1$ to $ \infty$, we have
 \begin{align}
  \frac{d}{dt}\mathcal{M}_1(t)=\sum_{i=1}^{\infty}i \Big( \frac{1}{2}\sum_{j=1}^{i-1} p_{j,i-j} a_{j,i-j} w_j(t) w_{i-j}(t)\Big) - \sum_{i=1}^{\infty} i \Big(\sum_{j=1}^{\infty} a_{i,j} w_i(t) w_j(t)\Big)  \nonumber \\
  +  \sum_{i=1}^{\infty} i \Big(\frac{1}{2}\sum_{j=i+1}^{\infty} \sum_{k=1}^{j-1} (1-p_{j-k,k}) N_{j-k,k}^i a_{j-k,k} w_{j-k}(t)w_k(t)\Big) \label{MCSUM}
 \end{align}
 Successive application of the results \eqref{ASSUM}, \eqref{LMC}, and \eqref{betammnt} leads to
 \begin{align*}
 \sum_{i=1}^{\infty}i \Big( \frac{1}{2}\sum_{j=1}^{i-1} p_{j,i-j} a_{j,i-j} w_j(t) w_{i-j}(t)\Big) &= \sum_{i=1}^{\infty} \sum_{j=1}^{\infty} i p_{i,j} a_{i,j} w_i(t) w_j(t)\\
  & \leq A \sum_{i=1}^{\infty} \sum_{j=1}^{\infty} i (i+j) w_i(t) w_j(t)\\
  & = A(M_2(T) + M_1(T)) M_1(T)<+\infty.
 \end{align*} 
 Similarly, we obtain
 \begin{align*}
 \sum_{i=1}^{\infty} i \Big(\sum_{j=1}^{\infty} a_{i,j} w_i(t) w_j(t) &= \sum_{i=1}^{\infty} \sum_{j=1}^{\infty}i a_{i,j} w_i(t) w_j(t)\\
  &\leq A(M_2(T) + M_1(T)) M_1(T)<+\infty ,
 \end{align*}
  and 
  \begin{align*}
  \sum_{i=1}^{\infty} i \Big(\frac{1}{2}\sum_{j=i+1}^{\infty} \sum_{k=1}^{j-1} (1-p_{j-k,k}) N_{j-k,k}^i a_{j-k,k}& w_{j-k}(t)w_k(t)\Big)\\
  & =\frac{1}{2}\sum_{j=1}^{\infty} \sum_{k=1}^{\infty} (j+k) a_{j,k}(1-p_{j,k}) w_j(t) w_k(t)\\
  & \leq A(M_2(T) + M_1(T)) M_1(T)<+\infty.
  \end{align*}
  Since all the summation terms on the right hand side of \eqref{MCSUM} are finite. Therefore by Fubini's theorem, on changing the order of summation, we get
  \begin{align*}
   \frac{d}{dt}\mathcal{M}_1(t) = 0.
  \end{align*}
 On integrating with respect to $t$, we  get the desired result.

\section{\textbf{Propagation of Moments }}

In this section, by mean of the following proposition, we show that for a given $ w^{0}= (w_i^{0})_{i\geq 1}\in \Omega_{\beta}^+(0)$ such that $ \sum_{i=1}^{\infty} i^q w_i^0 < \infty $ for some $q > 1$, the solution $w$ to \eqref{DCBEC} -\eqref{IC}, established in Theorem \ref{MT}, has the same features throughout time evolution i.e., $ \sum_{i=1}^{\infty} i^q w_i(t) <\infty$.

\begin{prop}\label{PROP}
Assume that \eqref{ASSUM} and  \eqref{LMC} are fulfilled, and let $w^{0}= (w_i^{0})_{i\in \mathbb{N}}\in \Omega_{\beta}^+(0) $ such that
\begin{align}
 \sum_{i=1}^{\infty} i^q w_i^{0} < \infty \hspace{.2cm}\text{for some} \hspace{.2cm} q>1.\label{INITIALMM}
 \end{align}
Then for each $T>0$, the solution $w$ to \eqref{DCBEC}--\eqref{IC} on $[0,+\infty)$, constructed in Theorem \ref{MT} satisfies
$$\sup_{t \in[0,T]}\sum_{i=1}^{\infty} i^q w_i(t) <\infty. $$
\end{prop}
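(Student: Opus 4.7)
The plan is to adapt the moment estimate already carried out in the existence section (leading to \eqref{betammnt}) from the exponent $\beta$ to an arbitrary exponent $q>1$. Working at the level of the truncated approximations $w^n$, for which all infinite sums in the weighted identity \eqref{GME} are in fact finite and no convergence issue arises, I would substitute $g_i=i^q$ to obtain
\begin{align*}
\frac{d}{dt}\mathcal{M}_q^n(t) =\,& \frac{1}{2}\sum_{i=1}^{\infty}\sum_{j=1}^{\infty}\bigl((i+j)^q - i^q - j^q\bigr) a_{i,j} w_i^n w_j^n \\
& -\frac{1}{2}\sum_{i=1}^{\infty}\sum_{j=1}^{\infty}(1-p_{i,j})\Bigl((i+j)^q - \sum_{s=1}^{i+j-1} s^q N_{i,j}^s\Bigr) a_{i,j} w_i^n w_j^n.
\end{align*}

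For $q\geq 1$ the elementary bound $s^q = s\cdot s^{q-1}\leq s(i+j)^{q-1}$ combined with \eqref{LMC} gives $\sum_{s=1}^{i+j-1} s^q N_{i,j}^s \leq (i+j)^{q-1}\sum_{s=1}^{i+j-1} sN_{i,j}^s =(i+j)^q$, so the second (breakage) sum is non-positive and may be discarded. Invoking \eqref{ASSUM} and the inequality from \cite{CARR 92} already used to derive \eqref{BEFINEQ}, namely $(i+j)\bigl((i+j)^q - i^q - j^q\bigr) \leq \jmath_q(ij^q + i^q j)$ for a constant $\jmath_q$ depending only on $q$, the estimate collapses to
\begin{align*}
\frac{d}{dt}\mathcal{M}_q^n(t) \leq A\jmath_q\, \mathcal{M}_1^n(t)\,\mathcal{M}_q^n(t) \leq A\jmath_q\, \Lambda_1\, \mathcal{M}_q^n(t).
\end{align*}
Since $\mathcal{M}_q^n(0)=\sum_{i=1}^{n} i^q w_i^0 \leq \sum_{i=1}^{\infty} i^q w_i^0 <\infty$ by \eqref{INITIALMM}, Gronwall's inequality yields a constant $K(T):=\bigl(\sum_{i=1}^{\infty} i^q w_i^0\bigr)\exp(A\jmath_q \Lambda_1 T)$, independent of $n$ and of $t\in[0,T]$, such that $\mathcal{M}_q^n(t)\leq K(T)$ for every $n\geq 2$ and every $t\in[0,T]$.

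To transfer this bound to $w$ itself I would use the pointwise convergence $w_i^{l}(t)\to w_i(t)$ established in \eqref{SUBSEQCONVERG} together with Fatou's lemma: for each fixed $t\in[0,T]$,
\begin{align*}
\sum_{i=1}^{\infty} i^q w_i(t) \leq \liminf_{l\to\infty} \sum_{i=1}^{\infty} i^q w_i^{l}(t) \leq K(T),
\end{align*}
which, upon taking the supremum in $t$, gives exactly the conclusion $\sup_{t\in[0,T]}\mathcal{M}_q(t)\leq K(T)<\infty$.

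The only non-routine ingredient is checking that the breakage contribution carries the favourable (non-positive) sign after the weighted summation; once the elementary estimate $\sum_{s=1}^{i+j-1} s^q N_{i,j}^s\leq (i+j)^q$ for $q\geq 1$ is in hand, everything else is a verbatim repetition of the argument producing \eqref{betammnt} with $q$ in place of $\beta$, so I do not anticipate any genuinely new obstacle. A minor bookkeeping point is that here $q$ need not be bounded above by $2$ (unlike $\beta$), but Carr's inequality is valid for all $q\geq 1$, so the argument carries through without modification.
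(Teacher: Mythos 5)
Your proposal is correct and follows essentially the same route as the paper: take $g_i=i^q$ in \eqref{GME}, drop the non-positive breakage contribution via $\sum_{s=1}^{i+j-1}s^qN_{i,j}^s\leq (i+j)^q$, apply the inequality of Carr together with \eqref{ASSUM} and \eqref{FM} to get a linear differential inequality for $\mathcal{M}_q^n$, close with Gronwall, and pass to the limit using \eqref{SUBSEQCONVERG}. The only cosmetic differences are that you bound $\mathcal{M}_1^n$ by $\Lambda_1$ where the paper uses $\|w^0\|$, and you make the limit passage explicit via Fatou's lemma, which is a welcome clarification rather than a deviation.
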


\begin{proof}
By \eqref{SUBSEQCONVERG}, we know that, for each $t\in [0,+\infty)$,

\begin{align*}
 \lim_{n\to \infty} w_i^{n} (t) =w_i(t),\hspace{.2cm} i \geq 1.
\end{align*}

 where $w_i^n $ refers to the solution to \eqref{DCBECTIL}-\eqref{ICTIL}  given by Theorem \ref{LET}. Taking $g_i=i^q, 1\leq i \leq n $ in \eqref{GME} and following the calculations similar to \eqref{betammnteq}, we obtain

 \begin{align*}
 \frac{d}{dt} \sum_{i=1}^{n} i^q w_i^n &\leq \frac{A \jmath_q }{2} \sum_{i=1}^{n-1} \sum_{j=1}^{n-i} ( ij^q + i^q j)w_i^n w_j^n \\
 & \leq A \jmath_q \|w^{0}\| \sum_{i=1}^n i^q w_i^{0} ,  
 \end{align*}
 and by the application of  Gronwall's lemma, we have
 $$\sum_{i=1}^{n} i^q w_i^n(t) \leq \exp(A \jmath_q \|w^0\| t) \sum_{i=1}^n i^q w_i^{0} \hspace{1cm}t \geq 0.$$

Now,  thanks to \eqref{SUBSEQCONVERG} and \eqref{INITIALMM}, we may pass to the limit as $ n \to \infty$ in the aforementioned inequality
 
 $$\sum_{i=1}^{\infty} i^q w_i(t) \leq  \exp(A \jmath_q \|w^{0}\| t) \sum_{i=1}^{\infty} i^q w_i^{0} \hspace{1cm}t \geq 0.$$
By taking the supremum over $t$, we complete  the proof of Proposition \ref{PROP}.
\end{proof}

\end{document}